\newtheorem{theorem}{Theorem}[section]
\newtheorem{teo}[theorem]{Theorem}
\newtheorem{lem}[theorem]{Lemma}
\newtheorem{fact}[theorem]{Fact}
\theoremstyle{definition}
\newtheorem{defi}[theorem]{Definition}
\DeclareMathOperator{\id}{id}
\DeclareMathOperator{\Id}{Id}
\DeclareMathOperator{\ev}{ev}
\newcommand{\ot}{\otimes}
\newcommand{\wt}{\widetilde}
\newcommand{\wh}{\widehat}
\def\YD{\mathcal{YD}}
\def\II{\mathcal I}
\def\t{\mathfrak t}
\def\u{\mathfrak u}
\def\b{\mathfrak b}
\def\B{\mathfrak B}
\def\ev{\mathrm{ev}}
\def\coev{\mathrm{coev}}
\def\om{\omega}
\def\To{\Rightarrow}
\title{Universal quantum (semi)groups
and Hopf envelopes: Erratum}
\author{Marco Andr\'es Farinati
\thanks{Dpto de Matem\'atica FCEyN UBA - IMAS (Conicet). 
e-mail: mfarinat@dm.uba.ar.
Partially supported by 
UBACyT 2018-2021
``K-teor\'ia y bi\'algebras en \'algebra, geometr\'ia y topolog\'ia''
and
PICT 2018-00858 ``Aspectos algebraicos y anal\'iticos de grupos
 cu\'anticos''.}}
\date{}
\begin{document}
\maketitle
{\em Keywords: Hopf Algebras; Quantum Groups; Universal bialgebra; FRT construction; Quantum determinant}

{\em e-mail: mfarinat@dm.uba.ar}

{\em Data availability statement: This manuscript has  no associated data.}

\begin{abstract}
In \cite{F} there is a statement generalizing the results in \cite{FG}.
Unfortunately there is a mistake in a computation that
 affects the main result. I don't know if the main result in \cite{F}
is true or not, but I propose an alternative statement (Theorem
\ref{mainthm}) that was actually
the main motivation in \cite{FG}. This statement answers in an affirmative way
the question whether the localization of the FRT construction with respect
to a quantum determinant is a Hopf algebra, in case the
Nichols algebra associated to the braiding is finite dimensional.
\end{abstract}

\section{Error in "Universal quantum(semi)groups and Hopf envelopes"}

For a bilinear form $b:V\times V\to k$ in a finite dimensional vector space $V$
over a field $k$ with basis
$x_\mu$, defined by
\[
b_{\mu\nu}:=b(x_\mu,x_\nu)
\]
Dubois-Violette and Launer \cite{DVL}
define a Hopf algebra with generators
$t_\lambda^\mu$ ($\lambda,\mu=1,\dots,\dim V$)
and relations (sum over repeated indexes)

\begin{eqnarray}\label{eqdv1}
b_{\mu\nu}t_ \lambda^\mu t_\rho^\nu = b_{\lambda\rho}1
\\
\label{eqdv2}
b^{\mu\nu}t^ \lambda_ \mu t^\rho_ \nu = b^{\lambda\rho}1
\end{eqnarray}

In \cite{F} there is a Lemma 2.1 saying
that equation \eqref{eqdv2} is redundant. Unfortunately the 
proof is incorrect.  I thank
Hongdi Huang and her collaborators
 Padmini Veerapen, Van Nguyen, Charlotte Ure, Kent Washaw and Xingting
 Wang for pointing me up the error. A lot of important consequences in 
\cite{F} are derived form this lemma, mainly Sections 2 and 3:

\begin{itemize}
\item Corollary 2.2 in \cite{F}, saying that $A(b)$, the universal bialgebra associated to a bilinear form, is a Hopf algebra.
\item Theorem 2.4 in \cite{F}, saying that a  universal bialgebra
associated to a specific bilinear form and a quotient of it is a Hopf algebra.
\item And the main result: Theorem 3.10 of \cite{F}, that says that 
$A(c)$ = the FRT construction associated to a solution $c:V\to V\to V\ot V$ 
of the braid equation in $V$ admitting a weakly graded-Frobenius algebra (WGF),
becomes a Hopf algebra when localizing with respect to the quantum determinant associated that WGF algebra.

\end{itemize}

On the other hand, the general universal constructions of Section 1
is independent of Lemma 2.1 and the following parts are still safe:
\begin{itemize}
\item The bialgebraic nature of the construction
(Theorem 1.1)
\item Its universal property (Proposition 1.3).
\item Example of computation 1.4 and Remark 1.5.

\item Section 4: the locally finite graded case and comments on 
other related works.

\end{itemize}

\section{The mistake, and alternatives to Lemma 2.1 in \cite{F}}

The mistake in the proof of Lemma 2.1 in \cite{F} relies in the confusion
of the matrix $\t$ with entries $(\t)_{ij}=t_i ^j$ between the inverse
of $\t$ and the inverse of the transposed matrix of $\t$. Even though I 
do not have a concrete counter-example, I think Lemma 2.1 is false in its
 full generality. However, one can still view Dubois-Violette and
 Launer's Hopf algebra as a universal bialgebra construction. 
Recall briefly the universal construction in \cite{F}:

\begin{defi} Let $V$ be a finite dimensional vector space
with basis $\{x_i\}_{i=1}^{\dim V}$ and $f:V ^{\ot n_1}\to
V^{\ot n_2}$ be a linear map. Consider free generators
$t_i ^j$ ($i,j=1,\dots,\dim V$) and using multi-index notation
\[
x_I:=x_{i_1}\ot x_{i_2}\ot\cdots \ot x_{i_\ell}\in V^{\ot \ell}
\]
\[
t_I^J:=t_{i_1}^{j_1}t_{i_2}^{j_2}\cdots t_{i_\ell}^{j_\ell}\in k\{t_i ^j,
i,j=1,\dots,\dim V\}
\]
write $f(x_I)=\sum_{J}f_I^Jx_J$ and define
 the two-sided ideal $\II_ f:=\big\langle
\sum_{J}(t_I^Jf_J^K-f_I^Jt_J^K ) : \ \forall I,K\big\rangle$
and the algebra
\[
A(f):=k\{t_i ^j:i,j=1\dots,\dim V\}/\II_f
\]
with comultiplication induced by
\[
\Delta t_i ^j=\sum_{i=1}^{\dim V}t_i ^k\ot t_k^j
\]
If $F=\{f_i:V ^{\ot n^i_1}\to V ^{\ot n^i_2}:i\in I\}$ is a family
of linear maps indexed by a set $I$, define $\II_F:=\sum_{i\in I}
\II_{f_i}$ and $A(F):=k\{t_i ^j:i,j=1\dots,\dim V\}/\II_F$
\end{defi}

\subsection{Dubois-Violette and Launer's Hopf algebra as a universal bialgebra}

If $b:V\times V\to k$ is a non-degenerate
bilinear form and using the notation
$b_{ij}=b(x_i,x_j)$, we consider {\em two} linear maps
\[
b:V^{\ot 2}\to V=V ^{\ot 0}\]
\[
x_i\ot x_j\mapsto b_{ij}
\]
and
\[
i_b:k\to V^{\ot 2}
\]
\[
1\mapsto 
\sum_{i,j}b^{ij}x_i\ot x_j
\]
where $b^{ij}$ are the $ij$-entries of the
inverse of the matrix $(B)_{ij}=b_{ij}$.

If we denote $H_{DV-L}(b)$ the Dubois-Violette and Launer's Hopf
 algebra, one tautologically has that 
\[
H_{DV-L}=A(b,i_b)
\]
(but not $A( b)$).

For reasons that will be clear soon, let us write $t_{\mu\nu}:=t_{\mu} ^{\nu}$.
Let us denote $B$ the matrix with indices
$(B)_{\mu\nu}=b_{\mu\nu}$, and keep the ''up convention'' for 
$b ^{\mu\nu}=(B ^{-1})_{\mu\nu}$.
Then the above equations are (sum over repeated indexes)
\begin{eqnarray}
b_{\mu\nu}t_ {\lambda\mu} t_{\rho\nu} = b_{\lambda\rho}1
\\
b^{\mu\nu}t_{\mu \lambda}t_{\nu\rho} = b^{\lambda\rho}1
\end{eqnarray}
We see that if $\t$ is the matrix with entries $(\t)_{ij}=t_{ij}$ then
the equations are
\begin{eqnarray}\label{m1}
\t B\t ^{tr}=B\
\\
\label{m2}
\t ^{tr}B^{-1}\t=B ^{-1}
\end{eqnarray}
where $\t ^{tr}$ denotes the transposed matrix.

Equation \eqref{m1} says that $\t$ has a right inverse (and $\t ^{tr}$
has a left inverse), but it is not obvious that this single equation implies that
 $\t$ has a left inverse (or that  $\t ^{tr}$ has a right inverse).
But clearly equation \eqref{m2} says that $\t$ has an inverse from the 
other side. The key point when proving both axioms of the antipode is
to prove that a given matrix has both left and right inverse. We
formalize the statement in the following lemma:

\begin{lem}
Assume $H$ is a bialgebra generated by some group-like elements and
a set $\{t_{ij},i,j=1,\dots,n\}$ with 
$\Delta(t_{ij})=\sum_{k=1} ^nt_{ik}\ot t_{kj}$
and $\epsilon(t_{ij})=\delta_{ij}$.
Denote  $\t\in M_n(H)$ the $n\times n$ matrix with entries
$(\t)_{ij}$. 
 Then the bialgebra $H$ is a Hopf algebra
if and only there exist an anti-algebra morphism $S:H\to H$ such
that $S(D)=D ^{-1}$ for all group-like generators $D$ and 
the matrix  $S(\t)$ with coefficients $(S(\t))_{ij}=S(t_{ij})$ is
the inverse of the matrix $\t$ in $M_n(H)$.
\end{lem}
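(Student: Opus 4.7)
The plan is to reduce both directions of the equivalence to verifying the antipode identities on the algebra generators, exploiting the fact that these identities carve out a subalgebra once $S$ is assumed to be anti-multiplicative.

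The forward direction is automatic: the antipode of any Hopf algebra is an anti-algebra morphism, it sends group-likes to their inverses, and the axiom $S(h_{(1)})h_{(2)} = \epsilon(h)1 = h_{(1)} S(h_{(2)})$ applied to $h = t_{ij}$, whose coproduct is $\sum_k t_{ik} \otimes t_{kj}$, reads entry-wise as $S(\t)\, \t = \Id_n = \t\, S(\t)$ in $M_n(H)$.

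For the converse, given an anti-algebra morphism $S$ with the stated properties, I would consider
\[
A := \{\, h \in H : S(h_{(1)}) h_{(2)} = \epsilon(h) 1 = h_{(1)} S(h_{(2)}) \,\}
\]
and argue in two steps. First, $A$ is a unital subalgebra of $H$: stability under products is a short Sweedler-notation computation that uses only the anti-multiplicativity of $S$ and the multiplicativity of $\epsilon$ (for $h,k\in A$, one telescopes $S(k_{(1)})S(h_{(1)})h_{(2)}k_{(2)}$ using $h\in A$, and symmetrically). Second, $A$ contains the generating set: each group-like $D$ belongs to $A$ because $S(D)D = D^{-1}D = 1 = \epsilon(D)1$ and symmetrically, while each $t_{ij}$ belongs to $A$ by reading the matrix identities $S(\t)\,\t = \t\, S(\t) = \Id_n$ entry-wise against the assumed formula for $\Delta(t_{ij})$. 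Since $H$ is generated as an algebra by these elements, $A = H$, so $S$ satisfies both convolution axioms and is therefore the antipode.

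There is no genuine obstacle here; the lemma is a packaging of the folklore principle ``check the antipode on generators'' together with the observation that the two-sided matrix inverse requirement on $\t$ is exactly the entry-wise transcription of both antipode axioms for the matrix subcoalgebra spanned by the $t_{ij}$. This also clarifies, in the Dubois-Violette--Launer setting, why equation (\ref{m2}) cannot be dispensed with in general: (\ref{m1}) alone yields only a right inverse of $\t$, hence only one of the two antipode identities on the $t_{ij}$, with no algebraic mechanism to produce the other.
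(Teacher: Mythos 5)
Your proof is correct and takes essentially the same route as the paper's: both directions amount to reading the two antipode identities entry-wise on the matrix coalgebra spanned by the $t_{ij}$ (and on the group-like generators), and then invoking the fact that it suffices to verify them on algebra generators. The only difference is that you spell out, via the subalgebra $A$ of elements satisfying both convolution identities, the justification that the paper compresses into ``easily checked on generators''; this is a worthwhile elaboration but not a different argument.
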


\begin{proof}
Assume $H$ is a Hopf algebra. The antipode
axiom says in particular
\[
m(\Id\ot S)\Delta(t_{ij})=\epsilon(t_{ij})=m(S\ot \Id)\Delta(t_{ij})
\]
But because of the comultiplication and counit properties of the $t_{ij}$ these equations translates into
\[
\sum_{k=1}^nt_{ik} S(t_{kj})=
\delta_{ij}=\sum_{k=1}^nS(t_{ik}) t_{kj}
\]
Denoting $S(\t)$ the matrix with entries $S(\t)_{ij}=S(t_{ij})$,
  the above equation forall $ij$ is simply the entries of the 
single matrix equation
\[
\t\cdot S(\t)=\Id_{n\times n}=S(\t)\cdot\t
\]
On the other hand, assume $H$ is a bialgebra generated by group-like 
elements and a set $\{t_{ij}:i,j=1,\dots,n\}$ where the elements
$t_{ij}$ satisfy 
$\Delta(t_{ij})=\sum_{k=1} ^nt_{ik}\ot t_{kj}$
and $\epsilon(t_{ij})=\delta_{ij}$. If $S(\t)=\t ^{-1}$ and
 $S(D)=D ^{-1}$ for
all group-like in the set of generators, then clearly $S$ satisfies
the antipode axiom on generators. Hence, $S$ is the antipode for $H$
and $H$ is a Hopf algebra.
\end{proof}

Recall the notation in $\cite{F}$: $c:V^{\ot 2}\to V^{\ot 2}$
is a solution of the braid equation and $A(c)$ is its universal bialgebra,
that is the algebra with generators $t_{ij}$ and relations
\[
\sum_{k,\ell}c_{ij}^{k\ell}t_{kr}t_{ \ell s}=
\sum_{k,\ell}t_{ik}t_{ j\ell} c_{k\ell}^{rs}
\hskip 1cm \forall\ 1\leq i,j,r,s\leq n.
\]
that coincides with the FRT construction \cite{FRT}.
Using the same idea as in the above lemma
 we have:

\begin{lem}\label{mainlemma}
Assume $D\in A(c)$ is a group-like element such that
the matrix $\t\in M_n\big(A(c)[D ^{-1}]\big)$ 
is invertible, then
$A(c)[D ^{-1}]$ is a Hopf algebra.
\end{lem}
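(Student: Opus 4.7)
The plan is to apply the preceding lemma by exhibiting an anti-algebra morphism $S$ on $A(c)[D^{-1}]$ with $S(\t)=\t^{-1}$ and $S(D)=D^{-1}$. Setting $T:=\t^{-1}\in M_n(A(c)[D^{-1}])$, which exists by hypothesis, I would define $S$ on the generators of $A(c)[D^{-1}]$ by $S(t_{ij}):=T_{ij}$ and $S(D^{-1}):=D$, and extend anti-multiplicatively.

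The crucial step is to check that this $S$ is well defined on $A(c)$, i.e.\ that the $T_{ij}$ satisfy the anti-morphism image of the FRT relations
\[
\sum_{k,\ell} c_{ij}^{k\ell}\, T_{\ell s}\, T_{kr} \;=\; \sum_{k,\ell} T_{j\ell}\, T_{ik}\, c_{k\ell}^{rs}.
\]
The approach here is to rewrite the FRT presentation in compact matrix form as $R\,T_1 T_2=T_1 T_2\,R$ in $M_{n^2}(A(c))$, where $R\in M_{n^2}(k)$ has entries $R_{(ij),(k\ell)}=c_{ij}^{k\ell}$ and $T_1=\t\otimes\Id$, $T_2=\Id\otimes\t$. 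Since $\t$ is invertible in $M_n(A(c)[D^{-1}])$, each of $T_1$, $T_2$ (block diagonal with copies of $\t$) is invertible, hence so is $T_1 T_2$ with inverse $T_2^{-1}T_1^{-1}$. Commutation with $R$ then passes to the inverse, giving $R\,T_2^{-1}T_1^{-1}=T_2^{-1}T_1^{-1}\,R$; expanding entrywise reproduces exactly the displayed relation.

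Granting well-definedness on $A(c)$, a standard convolution argument shows that the subset of elements of $A(c)$ on which $S$ satisfies both antipode axioms is a unital subalgebra (closed under products using anti-multiplicativity of $S$ and multiplicativity of $\Delta$), and by construction of $S$ it contains every generator $t_{ij}$. Applied to the group-like $D$ this forces $S(D)\,D=\epsilon(D)\cdot 1=1=D\,S(D)$, so $S(D)=D^{-1}$; hence declaring $S(D^{-1}):=D$ is consistent with the localization relation $D\cdot D^{-1}=1$, and $S$ extends to an anti-algebra morphism on all of $A(c)[D^{-1}]$ satisfying the hypotheses of the preceding lemma, which then concludes. The main obstacle is the matrix-form verification: because the FRT relation is not symmetric in $R$, one must correctly identify $T_2^{-1}T_1^{-1}$ (and not $T_1^{-1}T_2^{-1}$) as the relevant inverse and carefully read off its entries to match the anti-morphism image of the original relation; every other step is formal.
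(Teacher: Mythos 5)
Your proof is correct and follows essentially the same route as the paper: the paper verifies the opposite FRT relation for the entries of $\u=\t^{-1}$ by multiplying the candidate identity on both sides by sums of the form $t_{di}t_{cj}(-)t_{ra}t_{sb}$ (i.e.\ by the invertible matrix $T_1T_2$) and reducing to the original relation, which is exactly your matrix computation $R\,T_1T_2=T_1T_2\,R\Rightarrow R\,T_2^{-1}T_1^{-1}=T_2^{-1}T_1^{-1}R$ read entrywise. Your convolution-subalgebra argument for the antipode axioms (and for $S(D)=D^{-1}$) is a slightly more explicit version of the paper's ``easily checked on generators,'' so no substantive difference remains.
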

\begin{proof}
Assume $\t$ is an invertible matrix, call  $\u$ its inverse 
and $u_{ij}:=(\u)_{ij}$. Let us prove that there exists a unique
well-defined anti-algebra map
\[
A(c)\to A(c)[D ^{-1}]
\]
\[
t_{ij}\mapsto u_{ij}
\]
Since $A(c)$ is freely generated by the $t_{ij}$ with relations
\begin{equation}\label{FRTeq}
\sum_{k,\ell}c_{ij}^{k\ell}t_{kr}t_{ \ell s}=
\sum_{k,\ell}t_{ik}t_{ j\ell} c_{k\ell}^{rs}
\hskip 1cm \forall\ 1\leq i,j,r,s\leq n.
\end{equation}
one should check the opposite relation in $A(c)[D ^{-1}]$
\[
\sum_{k,\ell}c_{ij}^{k\ell}u_{ \ell s}u_{kr}\overset{?}{=}
\sum_{k,\ell}u_{ j\ell}u_{ik} c_{k\ell}^{rs}
\hskip 1cm \forall\ 1\leq i,j,r,s\leq n.
\]
But because the matrix  $\t$ is invertible in $M_n(A[D^{-1}])$,
we apply the operator
\[
\sum_{r,s,i,j}t_{di}t_{cj}\big(-\big)t_{ra}t_{sb}
\]
and we get the equivalent checking
\[
\sum_{k,\ell,r,s,i,j}t_{di}t_{cj}c_{ij}^{k\ell}u_{ \ell s}u_{kr}t_{ra}t_{sb}\overset{?}{=}
\sum_{k,\ell,r,s,i,j}t_{di}t_{cj}u_{ j\ell}u_{ik} c_{k\ell}^{rs}t_{ra}t_{sb}
\hskip 1cm \forall\ 1\leq i,j,r,s\leq n.
\]
Now using 
(on LHS) $\sum_{r,s}u_{ \ell s}u_{kr}t_{ra}t_{sb}=\delta_{\ell b}\delta_{ka}$ and (on RHS)
 $\sum_{ij}t_{di}t_{cj}u_{ j\ell}u_{ik}=\delta_{dk}\delta_{c\ell} $
we get
\[
\sum_{i,j}t_{di}t_{cj}c_{ij}^{ab}
\overset{?}{=}
\sum_{r,s} c_{dc}^{rs}t_{ra}t_{sb}
\]
and this is the same relation as \eqref{FRTeq}, that is valid on
$A(c)$, hence, it is valid in $A(c)[D ^{-1}]$ as well.

Recall that the non-commutative localization
$A(c)[D ^{-1}]$ is the algebra freely generated by $A(c)$ and the symbol 
$D ^{-1}$ with (the same relations as in $A(c)$ and) 
\[
DD^{-1}=1=D ^{-1}D
\]
Having defined an antialgebra map $A(c)\to A(c)[D ^{-1}]$ we extend to
a map $A(c)[D ^{-1}]\to A(c)[D ^{-1}]$ by sending $D\mapsto D ^{-1}$
and this define the desired map $S$: the antipode axioms for $S$
are easily 
checked
on generators.
\end{proof}

\section{Nichols Algebras and an alternative to Theorem 3.10 of \cite{F}}

We will use the following well-known
 facts from finite dimensional Nichols algebras.
Assume $(V,c)$ a rigid solution of YBeq such that  $\B(V,c)$ is finite
 dimensional.
 \begin{fact} $\B$ is a graded algebra and coalgebra.
It is not a Hopf algebra in the usual sense, but it is a Hopf algebra
in the category of Yetter-Drinfeld modules over some Hopf algebra $H$.
\end{fact}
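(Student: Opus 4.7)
My plan is to handle the three assertions — gradedness, failure to be an ordinary Hopf algebra, and existence of the braided Hopf structure in a Yetter--Drinfeld category — relying on the standard presentation of $\B(V,c)$ as a quotient of the tensor algebra. For the graded algebra and coalgebra structure, I would start from $T(V)=\bigoplus_{n\ge 0} V^{\ot n}$ with its tensor-degree grading, and equip it with the braided coproduct that makes the elements of $V$ primitive and is extended multiplicatively using $c$ (equivalently, the braided shuffle coproduct). Both structures respect the grading by construction. The Nichols algebra is the quotient of $T(V)$ by the maximal graded biideal meeting $V$ in zero, or equivalently $\bigoplus_n V^{\ot n}/\ker\Omega_n$ for the quantum symmetrizers $\Omega_n$ built from $c$; since the defining ideal is homogeneous, both structures descend to $\B(V,c)$. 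The failure to be a Hopf algebra in the ordinary sense is then a structural observation: the coproduct is only multiplicative when $\B\ot\B$ is given the braided tensor product algebra built from $c$, which differs from the ordinary one unless $c$ is the flip.

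For the braided Hopf structure in a Yetter--Drinfeld category, I would realize $(V,c)$ as an object of ${}_H^H\YD$ for a suitable Hopf algebra $H$. Rigidity of $(V,c)$ is the essential input, and allows one either to invoke Takeuchi's reconstruction theorem, or, more in the spirit of the present paper, to take for $H$ a Hopf envelope of the FRT bialgebra $A(c)$ — for instance the localization provided by Lemma \ref{mainlemma} when a group-like quantum determinant is available. Once $V\in {}_H^H\YD$ has its intrinsic braiding equal to $c$, the tensor algebra $T(V)$ becomes a Hopf algebra in ${}_H^H\YD$ automatically: the product and coproduct are morphisms of YD modules by naturality, and the same holds for the quantum symmetrizers, so the defining ideal of $\B(V,c)$ is a YD sub-biideal and the quotient inherits the full braided Hopf structure.

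The hardest step is this realization of $(V,c)$ as a Yetter--Drinfeld module, i.e.\ producing a Hopf algebra $H$ whose intrinsic braiding on $V$ matches $c$. This is the only non-formal input; gradedness, descent to the Nichols quotient, and transport of the antipode into the braided setting are then routine consequences of functoriality and the universal property of $\B(V,c)$.
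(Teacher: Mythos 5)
Your argument is correct and is essentially the construction the paper is tacitly invoking: the paper states this as a known Fact without proof, adding only that $H=H(c)$, the Hopf envelope of the FRT bialgebra $A(c)$, realizes $(V,c)$ in ${}_H^H\YD$ with intrinsic braiding $c$ — the same realization (and the same descent of the braided bialgebra structure on $T(V)$ through the quantum symmetrizers) that you propose. One small caution: your parenthetical suggestion to obtain $H$ from the localization $A(c)[D^{-1}]$ of Lemma \ref{mainlemma} would be circular in this paper, since the group-like $D$ and the invertibility of $\t$ are themselves extracted from the finite-dimensional Nichols algebra whose construction this Fact underwrites; the Hopf envelope or Takeuchi reconstruction routes you name first are the right ones.
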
 
For instance, $H=H(c)$ the Hopf envelope of  $A(c)$ do the work.

 \begin{fact} Denoting $\B^{top}$ the highest non-zero degree of $\B$,
one has $\dim\B^{top}=1$, say $\B^{top}=k \b$ for
a choice of a non-zero element $\b\in\B^{top}$. 
The projection into the coefficient
of $\b$
\[
\B\ni\om=\om_0+\om_1+\cdots+\om_{top}=
\om_0+\om_1+\cdots+\lambda \b
\ \ \longmapsto \ \ \lambda\in k\]
is an integral of the braided Hopf algebra $\B$. Also,
because of $\dim\B^{top}=1$, 
the $A(c)$-comodule structure gives a non trivial grouplike
element $D$ determined by
\[
\rho(\b)=D\ot \b
\]

\end{fact}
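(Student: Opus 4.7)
The plan is to prove the three assertions packaged in this fact in sequence: (A) that $\dim\B^{top}=1$, (B) that the top-degree projection $\lambda$ is an integral of $\B$, and (C) that the $A(c)$-coaction on $\b$ is through a group-like element $D$. I expect (A) to carry all the real content, and (B), (C) to follow formally once the grading structure is in hand.

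For (A), I would invoke the classical structure theory of finite-dimensional Nichols algebras, which asserts that $\B(V,c)$ is a graded Frobenius algebra in the braided category $\YD_H^H$. Equivalently, the braided analogue of Larson--Sweedler gives that the space of left integrals of $\B$ is one-dimensional, and the grading together with $\B^0=\k$ forces any such integral to live in top degree. Either route yields $\dim\B^{top}=1$.

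Granted (A), claim (B) reduces to a grading computation. For $\omega\in\B^k$ with $k<top$, coassociativity gives $\Delta(\omega)\in\bigoplus_{i+j=k}\B^i\ot\B^j$, so no summand has second tensor factor in $\B^{top}$ and hence $(\id\ot\lambda)\Delta(\omega)=0=\lambda(\omega)\cdot1$. For $\omega=\b$ the counit identity $(\eps\ot\id)\Delta(\b)=\b$ forces the $\B^0\ot\B^{top}$-component of $\Delta(\b)$ to equal $1\ot\b$ precisely, and no other summand of $\Delta(\b)$ has second factor of top degree; hence $(\id\ot\lambda)\Delta(\b)=1=\lambda(\b)\cdot 1$, which is the left-integral identity.

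For (C), I would use the universal property of $A(c)$: its defining coaction on $V$ extends uniquely to a graded comodule-algebra structure on $T(V)$ and descends through the Nichols quotient because the relations defining $\B(V,c)$ are built to be $A(c)$-equivariant. Restricting this coaction to the one-dimensional subcomodule $\B^{top}=\k\b$ forces $\rho(\b)=D\ot\b$ for a unique $D\in A(c)$, and applying coassociativity and the counit to $\b$ then yields $\Delta(D)=D\ot D$ and $\eps(D)=1$, so $D$ is a non-zero group-like element. The main obstacle in this whole program is (A): the Frobenius property of finite-dimensional Nichols algebras (equivalently, the one-dimensionality of the integral space) is the hard classical input, while (B) and (C) are bookkeeping once the grading structure is in place.
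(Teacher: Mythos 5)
Your proposal is correct, but note that the paper does not prove this statement at all: it is listed as a ``well-known fact'' about finite-dimensional Nichols algebras, with the substance (one-dimensionality of the top degree, the integral property, and the non-degeneracy of the multiplication pairing) deferred to the literature on Nichols algebras (results of Heckenberger--Schneider type). Your write-up therefore does more than the paper: it correctly isolates the single genuinely hard input, namely that a finite-dimensional Nichols algebra is graded Frobenius in ${}^H_H\YD$, equivalently that its space of integrals is one-dimensional and concentrated in top degree --- this is exactly the external theorem the paper is implicitly citing --- and then derives (B) and (C) by elementary grading arguments. Those derivations are sound: the computation $(\id\ot\lambda)\Delta(\om)=\lambda(\om)1$ uses only that $\Delta$ is graded and that the counit forces the $\B^0\ot\B^{top}$-component of $\Delta(\b)$ to be $1\ot\b$; and for (C), the coaction of $A(c)$ on $T(V)$ is degree-preserving and descends to $\B$ because the quantum symmetrizers are $A(c)$-comodule maps, so $\B^{top}$ is a one-dimensional subcomodule and coassociativity plus counitality give $\Delta(D)=D\ot D$, $\eps(D)=1$. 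The one cosmetic gap is the word ``non trivial'': you establish $D\neq 0$ (which is all the subsequent localization argument needs), but you do not address whether $D\neq 1$; the paper asserts this without argument as well, so nothing downstream is affected.
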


\begin{fact}  \label{ev}
For each degree, the multiplication map
induces a  non-degenerate pairing
\[
m|:\B ^p\ot\B^{top-p}\to\B ^{top}
\]
 In particular, since $\B ^1=V$, the restriction of the multiplication
\[
V\ot\B ^{top-1}\to \B ^{top}=\b k\]
gives a non-degenerate pairing. In terms of basis, if $\om ^i$ is a basis
of $\B ^{top-1}$ and $x_i$ is a basis of $V$, then
write
\[
\om ^ix_j=m_{ij}\b,\ \ m_{ij}\in k
\]
and the matrix $(m_{ij})$ is invertible, and one can choose a
''dual basis'' $\om ^i$ such that
\[
x_i\om ^j=\delta_{ij}\b
\]
\end{fact}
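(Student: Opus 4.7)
The fact asserts that (a) for every degree $p$, the multiplication pairing $m:\B^p\otimes\B^{top-p}\to\B^{top}$ is non-degenerate, and (b) when $p=1$ a basis $\{\om^i\}$ of $\B^{top-1}$ can be chosen so that $x_i\om^j=\delta_{ij}\b$. The plan is to deduce both from the braided Frobenius property of $\B$.

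The core input is already given by the preceding fact: the functional $\int:\B\to k$, $\om\mapsto\lambda$ (where $\om_{top}=\lambda\b$), is an integral of the braided Hopf algebra $\B$. Combined with finite dimensionality and bijectivity of the antipode of a finite-dimensional Nichols algebra, the braided version of the Larson--Sweedler theorem yields that $\B$ is Frobenius: the bilinear form $(x,y)\mapsto\int(xy)$ is non-degenerate on $\B\otimes\B$. Because $\int$ vanishes outside the top degree and the multiplication is graded, this form is automatically supported on the bidegrees with $p+q=top$ and restricts to a perfect pairing $\B^p\otimes\B^{top-p}\to k$; identifying $k\cong\B^{top}$ via $\b$ gives (a). The same argument applied to $(x,y)\mapsto\int(yx)$, which is also non-degenerate since $\int$ is two-sided, shows the opposite-order pairing $\B^{top-p}\otimes\B^p\to\B^{top}$ is perfect as well.

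Specializing to $p=1$, the non-degeneracy of both $\om^i x_j$ and $x_i\om^j$ as bilinear forms $\B^{top-1}\times V\to k$ and $V\times\B^{top-1}\to k$ forces $\dim\B^{top-1}=\dim V=n$. The matrix $(m_{ij})$ with $\om^i x_j=m_{ij}\b$ is then invertible, and the desired dual basis in (b) is produced by inverting the matrix of $x_i\om^j$ and performing the corresponding change of basis in $\B^{top-1}$; this is pure linear algebra.

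The main obstacle is justifying the braided Larson--Sweedler step, since the classical unbraided proof that ``integral $\Rightarrow$ Frobenius'' proceeds via Hom--tensor adjunction in vector spaces. In the present setting the argument must be run inside the Yetter--Drinfeld category, tracking braidings carefully and leaning on rigidity of $V$ and bijectivity of the antipode. Once the Frobenius form is in hand, the graded decomposition and the dual-basis construction are essentially bookkeeping.
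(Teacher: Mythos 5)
Your outline is essentially the argument that the paper itself is (implicitly) relying on: the paper states this as a \emph{Fact}, with no proof, because it is a standard result on finite-dimensional Nichols algebras (it is exactly Theorem 4.4.13 in Heckenberger--Schneider's book, which the author's own notes point to: one-dimensional top degree, the projection onto it is an integral, and the multiplication pairing is non-degenerate). Your route --- integral plus finite dimensionality gives the Frobenius form $\int(xy)$ via the braided Larson--Sweedler/fundamental theorem of Hopf modules, the grading concentrates the form on bidegrees $p+(top-p)$, and the $p=1$ case plus a linear change of basis in $\B^{top-1}$ produces the dual basis with $x_i\om^j=\delta_{ij}\b$ --- is the standard proof, and the bookkeeping you describe (equality of $\dim V$ and $\dim\B^{top-1}$, invertibility of $(m_{ij})$, inverting the matrix of $x_i\om^j$) is correct. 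The one step you flag but do not carry out, the braided Larson--Sweedler theorem in ${}^H_H\YD$, is a genuine theorem available in the literature and is precisely what the paper's citation covers, so deferring to it is legitimate here; a self-contained proof would indeed have to redo the Hopf-module argument in the braided category, but that is not expected at the level of a stated ``Fact''. The only caution worth recording is notational: non-degeneracy of the single pairing $\B^p\ot\B^{top-p}\to\B^{top}$ for \emph{every} $p$ already covers both orders $\om^i x_j$ and $x_i\om^j$ (take $p=top-1$ and $p=1$ respectively), so no separate two-sidedness of the integral is strictly needed for the statement as written.
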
 

\begin{fact} If $(V,c)$ is rigid then so is $(V ^*,c ^*)$, and
$\B(V ^*,c ^*)$ is the graded dual (as algebra and coalgebra)
of $\B(V,c)$.
\end{fact}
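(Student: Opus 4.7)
My plan is to exploit the quantum-symmetrizer presentation of the Nichols algebra. Recall that in each degree, $\B(V,c)^n \cong V^{\ot n}/\Ker \Om_n^{(c)}$, where $\Om_n^{(c)} = \sum_{\sigma\in S_n} T_\sigma^{(c)}$ is the sum over the Matsumoto section of $S_n$ into the braid group, with simple transpositions acting via $c$. Thus $\B(V,c)$ is determined, as a graded Hopf algebra in the relevant braided category, by this sequence of maps, and to identify $\B(V^*,c^*)$ with the graded dual of $\B(V,c)$ it suffices to produce a non-degenerate graded pairing that intertwines the two families of quantum symmetrizers and exchanges product with coproduct.

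First I would verify that $(V^*, c^*)$ is a rigid solution of the braid equation. Here $c^*\colon V^*\ot V^*\to V^*\ot V^*$ is defined as the categorical transpose of $c$ using the evaluation/coevaluation maps furnished by the rigidity of $V$; a direct diagrammatic check (turning the braid diagram for $c$ upside down) shows $c^*$ inherits the braid equation, and $(V,c)$ itself serves as a dual object for $(V^*,c^*)$, so rigidity propagates symmetrically.

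Next I would iterate the evaluation into a graded non-degenerate pairing $\langle -,-\rangle_n\colon (V^*)^{\ot n}\ot V^{\ot n}\to k$, with factor order chosen so that the braidings line up. The central computation is that each braided simple transposition satisfies $\langle T_i^{(c^*)}\xi, w\rangle_n = \langle \xi, T_i^{(c)} w\rangle_n$; summing over the Matsumoto section yields $\langle \Om_n^{(c^*)} \xi, w\rangle_n = \langle \xi, \Om_n^{(c)} w\rangle_n$. Non-degeneracy then forces $\Ker \Om_n^{(c^*)}$ to be the annihilator of the image of $\Om_n^{(c)}$, giving $\B(V^*,c^*)^n \cong (\B(V,c)^n)^*$ in every degree. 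A parallel bookkeeping argument, using the fact that concatenation in $T(V)$ is dual to braided deconcatenation in $T(V^*)$ under the same pairing, promotes this degreewise isomorphism to a duality of graded braided Hopf algebras.

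The main obstacle, in my experience, is purely conventional: aligning the order of tensor factors, the direction of the evaluation, and the choice of Matsumoto section so that the pairing intertwines $\Om^{(c)}$ with $\Om^{(c^*)}$ rather than with $\Om^{(c^{-1})}$ or some other close cousin. The conceptual content -- that Nichols algebras are determined by quantum symmetrizers and that dualising the braiding dualises those symmetrizers -- is well-established; the delicate part is fixing conventions up front so that the four compatibilities (algebra, coalgebra, pairing, braiding) remain simultaneously coherent through every step of the argument.
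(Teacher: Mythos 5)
The paper does not prove this Fact at all: it is quoted, without argument, as one of several ``well-known facts'' about finite-dimensional Nichols algebras imported from the literature, so there is no in-paper proof to compare against. Your sketch is the standard argument behind that citation, and it is essentially correct. Two points deserve to be made explicit rather than left to ``bookkeeping''. First, under the natural pairing of $(V^*)^{\ot n}$ with $V^{\ot n}$ the adjoint of $T_\sigma^{(c)}$ is $T_{\sigma^{-1}}^{(c^*)}$ (a reduced word read backwards is a reduced word for the inverse), not $T_\sigma^{(c^*)}$; this is harmless precisely because the quantum symmetrizer sums over all of $S_n$ and $\sigma\mapsto\sigma^{-1}$ is a bijection, but it is the reason the argument works and should be said. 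Second, the degreewise statement $\Ker\Om_n^{(c^*)}=(\mathrm{Im}\,\Om_n^{(c)})^{\perp}$ only gives an isomorphism of graded vector spaces; to get duality of algebra and coalgebra structures you need the factorization of the quantum symmetrizer through concatenation and the braided shuffle coproduct (equivalently, the identification of $\B(V,c)$ with the subalgebra generated by $V$ inside the quantum shuffle algebra), which your last paragraph gestures at but does not carry out. With those two items filled in, the proof is complete and matches the one found in the standard references on Nichols algebras.
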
 

\begin{fact} \label{coev} By duality (using Fact \ref{ev}),
denoting $\coev$ the comultiplication composed with projection
\[
\xymatrix{
\B ^{top}\ar@/_3ex/[rr]_{\coev}\ar[r] ^<<<<<{\Delta}
&
\underset{p}{\oplus}\B ^{top-p}\ot \B^p\ar[r] ^{\pi}&\B^{top-1}\ot V},
\]
it is non degenerate, in the sense that if $x_i$, $\wh \om ^i$ are bases
of $V$ and $\B ^{top-1}$ respectively, and
\[
\coev(\b)=\sum_{ij}\coev_{ij}\wh\om^i \ot  x_j,\ \ \ \coev_{ij}\in k
\]
then the matrix $(\coev_{ij})$ is invertible. In particular,
there exists a basis $\wh\om ^i$ such that
\[
\coev(\b)=\sum_i \wh\om^i \ot  x_i
\]
\end{fact}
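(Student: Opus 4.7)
The plan is to deduce the claim from Fact \ref{ev} applied to the dual Nichols algebra $\B(V^*, c^*)$, via the graded Hopf-algebra duality between $\B(V, c)$ and $\B(V^*, c^*)$ supplied by the preceding fact. The key observation is that under graded duality the multiplication on $\B(V^*, c^*)$ transposes to the comultiplication on $\B(V, c)$; so the non-degenerate multiplication pairing guaranteed by Fact \ref{ev} on the dual side translates into a non-degenerate component of the comultiplication on the original side, which is precisely what $\coev$ captures.

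Concretely, $\B(V^*, c^*)$ is itself finite dimensional, rigid, with one-dimensional top degree, and has $V^*$ as its degree-one piece. Applying Fact \ref{ev} to it gives a non-degenerate pairing $V^* \otimes \B(V^*, c^*)^{top-1} \to \B(V^*, c^*)^{top}$. Under the identification $\B(V^*, c^*) \cong \B(V, c)^*$ of graded braided Hopf algebras, this multiplication is the transpose of the $(1, top-1)$-bidegree component of the comultiplication of $\B(V, c)$, i.e. of $\B^{top} \to V \otimes \B^{top-1}$. Non-degeneracy is preserved under transpose and under a swap of tensor factors, so $\coev : \B^{top} \to \B^{top-1} \otimes V$ is non-degenerate and the matrix $(\coev_{ij})$ is invertible in any choice of bases.

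The existence of the distinguished basis $\wh\om^i$ realizing $\coev(\b) = \sum_i \wh\om^i \otimes x_i$ then follows by a standard linear change of basis: if $(\coev_{ij})$ has inverse $(M_{ji})$ in some initial basis $\om^i$ of $\B^{top-1}$, one takes $\wh\om^k := \sum_j M_{jk}\om^j$ and verifies the normal form by a direct indexing computation. This mirrors exactly the rescaling done in Fact \ref{ev} to produce the dual basis with $x_i \om^j = \delta_{ij}\b$.

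The main subtlety will be verifying the graded duality identification cleanly at the level of braided Hopf algebras, tracking how the braiding $c$ enters so that multiplication in $\B(V^*, c^*)$ really becomes the transpose of comultiplication in $\B(V, c)$ rather than merely something equivalent to it up to an unspecified braid factor. Once conventions are fixed (as is standard for graded duals of connected braided Hopf algebras), non-degeneracy transfers by a formal argument and the remaining step is pure linear algebra.
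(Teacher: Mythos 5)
Your proposal is correct and follows essentially the same route as the paper, which justifies this Fact only by the phrase ``By duality (using Fact \ref{ev})'': you apply Fact \ref{ev} to the graded dual Nichols algebra $\B(V^*,c^*)$ and transpose the resulting non-degenerate multiplication pairing into the relevant component of the comultiplication of $\B(V,c)$, then normalize the basis by elementary linear algebra. The convention issue you flag (order of tensor factors under graded duality in the braided setting) is real but harmless here, since invertibility of the coefficient matrix is insensitive to the order of the factors.
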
 

\begin{fact} The maps in \ref{ev} and \ref{coev} are
$A(c)$-colinear.
\end{fact}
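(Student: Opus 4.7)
The plan is to observe that $\B(V,c)$ carries a natural $A(c)$-comodule structure compatible with its grading, multiplication, and braided comultiplication; the maps of Facts \ref{ev} and \ref{coev} are then obtained by restriction or projection along $A(c)$-subcomodules and are hence automatically $A(c)$-colinear.

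First I would recall that $V$ is an $A(c)$-comodule via $\rho(x_i)=\sum_j t_{ij}\ot x_j$, and that the defining relations \eqref{FRTeq} of $A(c)$ say precisely that the braiding $c:V^{\ot 2}\to V^{\ot 2}$ is $A(c)$-colinear. Consequently every tensor power $V^{\ot n}$ is an $A(c)$-comodule, and the standard presentation of $\B(V,c)$ as the quotient of the tensor algebra by the sum of the kernels of the braided symmetrizers exhibits the relation ideal as an $A(c)$-subcomodule. Hence $\B(V,c)$ inherits an $A(c)$-comodule algebra structure that preserves the grading and has $A(c)$-colinear multiplication. Restricting this multiplication to $V\ot\B^{top-1}\to\B^{top}$ yields exactly the map $m|$ of Fact \ref{ev}, which is therefore $A(c)$-colinear.

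For Fact \ref{coev}, recall that the braided comultiplication $\Delta:\B\to\B\ot\B$ is the unique algebra map (with respect to the braided multiplication on the target, built from $c$) satisfying $\Delta(x)=x\ot 1+1\ot x$ for $x\in V$. The initial assignment $V\to\B\ot\B$ is manifestly $A(c)$-colinear, and the braided tensor product on $\B\ot\B$ is itself an $A(c)$-comodule algebra because both $m$ and $c$ are $A(c)$-colinear. An induction on degree then extends colinearity of $\Delta$ from $V$ to all of $\B$. Composing with the projection onto the graded summand $\B^{top-1}\ot V$, which is colinear because the coaction preserves grading, one recovers $\coev$, which is therefore $A(c)$-colinear.

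The only step requiring some care is checking that the braided tensor product $\B\ot\B$ really is an $A(c)$-comodule algebra; this follows by a direct diagram chase from the colinearity of $c$ together with the comodule axioms, and no input beyond the very definition of $A(c)$ as the universal bialgebra coacting on $(V,c)$ is needed.
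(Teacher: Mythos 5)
Your argument is correct, and it is the standard justification for this statement, which the paper records as a known \emph{Fact} without proof: the FRT relations \eqref{FRTeq} make $c$ an $A(c)$-comodule map, so every structure map of $\B(V,c)$ built from $c$ and concatenation (the quantum symmetrizers, hence the defining ideal, the multiplication, the braided tensor product, and the braided comultiplication) is automatically colinear, and restriction/projection to graded components preserves colinearity because the coaction respects the grading. The only point worth flagging is the scalar $q$ in Theorem \ref{mainthm}: one uses $A(c)=A(qc)$ so that $\B(V,qc)$ is still an $A(c)$-comodule, exactly as the paper notes before stating the theorem.
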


Now denote $\rho:\B\to A(c)\ot \B$ the comodule structure map
and write  $\rho(\om ^j)=T_ {jk}\ot \om ^k$
and $\rho(\wh \om ^j)=\wh T_ {jk}\ot \wh\om ^k$, where 
$\{\om ^j\}$ and $\{\wh\om^j\}$ are basis of $\B ^{top-1}$ as
in \ref{ev} and \ref{coev}.
By $A(c)$-colinearity we have
\[
D\ot\delta_{ij}\b
=\rho(\delta_{ij}\b)=
\rho(x_i\om ^j)
=
t_{ ik}T_ {jl}\ot x_k\om ^l
=
t_{ ik}T_ {jl}\ot \delta_k ^l\b=
t_{ ik}T_ {jk}\ot \b
\]
\[
\To
t_{ ik}T_ {jk}=D\delta_{ij}\]
\[
\iff
\t\cdot  T ^{tr}=D\id\]

Now using the $\wh\om ^j$'s:
\[
\rho(\sum_i \wh\om^i\ot x_i)=
\rho(\coev( \b))
=(1\ot \coev )\rho(\b)\]
\[
=(1\ot \coev )(D\ot\b)
=
D\ot \sum_i (\wh\om^i\ot x_i)
\]
but also

\[
\rho(\sum_i \wh\om^i\ot x_i)=
\sum_{i,j,k}\wh T_ {ij} t_{ik}\ot \wh\om^j\ot x_k
\]
This proves
\[
\wh T_ {ij} t_{ik}=D\delta_{jk}
\To
\wh T ^{tr}\cdot \t=D\id
\]
hence, $D ^{-1}\wt T ^{tr}$ is a left inverse of $\t$, that is,
$\t$ is invertible in $M_n\big(A(c)[D ^{-1}]\big)$.

Using \ref{mainlemma} and observing that $A(c)=A(qc)$ 
for any $0\neq q\in k$, one can conclude the following main result,
 that is an alternative to Theorem 3.10 in \cite{F}:

\begin{teo}\label{mainthm}
Let $V$ be a finite dimensional vector space, $c:V ^{\ot 2}\to V ^{\ot 2}$
a rigid solution of the braid equation and assume there is a non-zero scalar
$0\neq q\in k$ such that $\B:=\B(V,cq)$ is finite dimensional. Denote
$D$ the associated group-like element in $A(c)$ coming from
$\B^{top}$. Then $A(c)[D^{-1}]$ is a Hopf algebra.
\end{teo}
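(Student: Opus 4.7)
The plan is to reduce the theorem to Lemma \ref{mainlemma} by exhibiting a two-sided inverse for the generating matrix $\t$ in $M_n(A(c)[D^{-1}])$. Since the hypothesis gives us $\B(V,cq)$ finite dimensional for some scalar $q$, and since the FRT/universal bialgebra construction is invariant under rescaling the braiding (that is, $A(c) = A(qc)$, because rescaling $c$ only rescales the defining relations), we may harmlessly replace $c$ by $cq$ and assume $\B := \B(V,c)$ is itself finite dimensional. The group-like $D \in A(c)$ associated to the one-dimensional top degree $\B^{top}$ is unchanged by this normalization (up to identification), so the statement is insensitive to passing between $c$ and $cq$.

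Next, I would invoke the five facts on Nichols algebras stated just before the theorem. The key input is that $\B^{top}$ is one-dimensional, which produces the grouplike $D$ via $\rho(\b) = D \otimes \b$, together with the two non-degenerate pairings in Facts \ref{ev} and \ref{coev}: the evaluation $V \otimes \B^{top-1} \to \B^{top}$ with dual basis $\omega^i$ satisfying $x_i \omega^j = \delta_{ij}\b$, and the coevaluation with dual basis $\wh\omega^i$ satisfying $\coev(\b) = \sum_i \wh\omega^i \otimes x_i$. The crucial point is Fact 2.5, which says these maps are $A(c)$-colinear; this is what allows us to translate the purely categorical structure of $\B$ into concrete identities among the generators $t_{ij}$.

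The heart of the proof is then the two matrix identities derived in the paragraphs immediately preceding the theorem statement: writing $\rho(\omega^j) = T_{jk}\otimes \omega^k$ and $\rho(\wh\omega^j) = \wh T_{jk}\otimes \wh\omega^k$, colinearity of multiplication against $\delta_{ij}\b = x_i \omega^j$ gives
\[
\t \cdot T^{tr} = D\,\id,
\]
while colinearity of $\coev$ against $\sum_i \wh\omega^i \otimes x_i$ gives
\[
\wh T^{tr} \cdot \t = D\,\id.
\]
Therefore, inside $M_n(A(c)[D^{-1}])$, the matrix $\t$ admits $D^{-1} T^{tr}$ as a right inverse and $D^{-1} \wh T^{tr}$ as a left inverse; standard associative-ring arithmetic forces these to coincide, so $\t$ is invertible in $M_n(A(c)[D^{-1}])$.

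Finally, I would apply Lemma \ref{mainlemma} directly: the hypothesis that $D$ is group-like and that $\t$ is invertible in $M_n(A(c)[D^{-1}])$ is exactly what that lemma requires, and it delivers an antipode on $A(c)[D^{-1}]$, completing the proof. The main obstacle is really the preparatory work in Section 3, namely extracting the two identities $\t T^{tr} = D\,\id$ and $\wh T^{tr}\t = D\,\id$ from the Nichols-algebra structure; once both sides are available, the conclusion is immediate from Lemma \ref{mainlemma}. The normalization argument $A(c) = A(qc)$ is a minor but essential bookkeeping step to reconcile the hypothesis (which allows $\B$ to be finite dimensional only after rescaling) with the setup of the facts (which presume $\B(V,c)$ is finite dimensional outright).
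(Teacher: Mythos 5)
Your proposal is correct and follows essentially the same route as the paper: it derives the two identities $\t\cdot T^{tr}=D\,\id$ and $\wh T^{tr}\cdot\t=D\,\id$ from the $A(c)$-colinearity of the multiplication and coevaluation pairings on the finite-dimensional Nichols algebra, concludes that $\t$ is invertible in $M_n\big(A(c)[D^{-1}]\big)$, and then applies Lemma \ref{mainlemma} together with the normalization $A(c)=A(qc)$. No gaps; this matches the paper's argument.
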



\begin{thebibliography}{99}



\bibitem[DV-L]
{DVL}
{\sc M. Dubois-Violette and G. Launer},
{\em The quantum group of a non-degenerate bilinear form},
Physics Letters B,
Volume 245, number 2 (1990) pp.  175-177.

\bibitem[FRT]
{FRT}
{\sc L.D. Faddeev, N.Yu. Reshetikhin} and {\sc L.A. Takhtajan},
{\em Quantization of Lie groups and
Lie algebras}, Leningrad Math. J. 1 (1990) 193.

\bibitem[F]{F} {\sc M. Farinati},
{\em Universal quantum (semi)groups and Hopf envelopes}.
Algebra and Rep. Theory (2022).
\url{https://doi.org/10.1007/s10468-022-10122-9}.


\bibitem[FG]
{FG}
{\sc M . Farinati and G. A. Garc\'ia},
 {\em
 Quantum function algebras from finite-dimensional Nichols algebras},
  J. Noncommutative Geometry,
\textbf{14}(3), (2020), 879--911.
\url{https://doi.org/10.4171/jncg/381}.


%\bibitem[H]
%{H}{\sc T. Hayashi},
%{\em Quantum groups and quantum determinants},
%J. Alg. \textbf{152} (1992),
%146--165.


\end{thebibliography}
\end{document}